\def\blfootnote{\xdef\@thefnmark{}\@footnotetext}
\newtheorem{theorem}{Theorem}[section]
\newtheorem{proposition}[theorem]{Proposition}
\newtheorem{lemma}[theorem]{Lemma}
\newtheorem{corollary}[theorem]{Corollary}
\newtheorem{question}[theorem]{Question}
\newcommand{\F}{\mathbb{F}}
\newcommand{\Z}{\mathbb{Z}}
\newcommand{\Q}{\mathbb{Q}}
\newcommand{\C}{\mathbb{C}}
\newcommand{\ord}{{\rm ord}}
\newcommand{\opt}{{\rm opt}}
\newcommand{\optg}{{\rm opt}}
\newcommand{\ml}{{\rm ml}}
\newcommand{\mlg}{{\rm ml}}
\newcommand{\gb}{{B}}
\title{B\"uchi's problem in modular arithmetic for arbitrary quadratic polynomials}
\date{}
\author{Pablo S\'aez, Xavier Vidaux and Maxim Vsemirnov}
\begin{document}

\maketitle

\begin{abstract}
Given a prime $p\ge5$ and an integer $s\ge1$, we show that there exists an integer $M$ such that for \emph{any} quadratic polynomial $f$ with coefficients in the ring of integers modulo $p^s$, such that $f$ is not a square, if a sequence $(f(1),\dots,f(N))$ is a sequence of squares, then $N$ is at most $M$. We also provide some explicit formulas for the optimal $M$.\blfootnote{The three authors have been partially supported by the first author Fondecyt research projects 1130134 and 1170315, Chile. The third author is partially supported by the Government of the Russian Federation (grant 14.Z50.31.0030).}
\end{abstract}

\vspace{-5pt}

{\footnotesize \noindent \emph{Keywords:} B\"uchi sequence, Hensley sequence\\
\emph{MSC 2010 Classification:} 11B50, 11B83}


\section{Introduction}

We are interested in the following question: 

\begin{question}\label{quest}
Given an integer $m\ge3$ and a quadratic polynomial $f(x)=f_2x^2+f_1x+f_0$ over $\Z$, consider the sequence 
$$
\gb_{f}^N=(f(1),\dots,f(N))
$$ 
modulo $m$. How long can this sequence be if every element of it is a square modulo $m$ but $f$ itself is not a square modulo $m$? 
\end{question}

The same question can be considered for any commutative ring $R$ with unit instead of a quotient of $\Z$. For $R=\Z$, it was first asked by R. B\"uchi in the early seventies, and was motivated by a decision problem in Logic --- see \cite{Lip90} and \cite{Maz94}. 

In the case of modular arithmetic, Question \ref{quest} was first addressed by D. Hensley in \cite[Thm. 3.1]{Hensley}. He showed that in the particular case where $m$ is an odd prime number and $f(x)$ is of the form $(x-\nu)^2-a$, then $N$ is strictly less than $m$ --- nevertheless he does not give any explicit formula for the largest possible $N$ as $\nu$ and $a$ vary. In \cite{SaezVidauxVsemirnov15}, we deal with the case where, for an odd given $m$, $f_2$ is invertible modulo $m$ --- see Theorem \ref{thJNT} below for the case of prime powers, and \cite[Section 5]{SaezVidauxVsemirnov15} for general $m$. In the present paper, we solve the problem for any given prime power and any $f$, by reducing it to the case where $f_2$ is invertible --- see Theorem \ref{small} below. 

To give a taste of our main result without introducing too many technicalities, here we state a corollary. 

\begin{theorem}\label{corsmall}
Let $p$ be a prime $\ge3$. Assume that $g_2$ is a non-zero square modulo $p$, $s$ and $t_2<s$ are positive even integers, and $f_2=p^{t_2}g_2$. As $f$ varies within the set of non-square quadratic polynomials with this restriction on $f_2$, the largest possible $N$ such that each of $f(1)$, \dots, $f(N)$ is a square modulo $p^s$, is
$$
p^{\frac{s-t_2}{2}}-1,
$$
(i.e.: there are sequences of this length, and no larger ones). 
\end{theorem}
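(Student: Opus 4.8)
The plan is to prove the two matching inequalities separately, writing $r:=(s-t_2)/2$, which is a positive integer because $s$ and $t_2$ are even and $t_2<s$; the asserted optimal value is then $p^{r}-1$. The lower bound is an explicit construction, the upper bound is an application of the two main results quoted above (Theorems \ref{small} and \ref{thJNT}).

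\emph{Construction (lower bound).} The idea is to choose $f$ as close to a perfect square as the non-squareness constraint allows. I would set
$$
f(x)=p^{t_2}g_2\bigl(x^2+p^{2r-1}\bigr),
$$
so that $f_2=p^{t_2}g_2$ has the prescribed form. Since $p$ is odd and $g_2$ is a nonzero square modulo $p$, Hensel's lemma makes $g_2$ a square modulo $p^{s}$, and as $t_2$ is even, $p^{t_2}g_2$ is a square modulo $p^{s}$; call it $C^{2}$. For $1\le n\le p^{r}-1$ write $n=p^{k}m$ with $p\nmid m$ and $0\le k\le r-1$. Then
$$
n^2+p^{2r-1}=p^{2k}\bigl(m^2+p^{2r-1-2k}\bigr)=p^{2k}m^2\bigl(1+p^{\,2r-1-2k}m^{-2}\bigr),
$$
and because $p$ is odd, an element of the form $1+p^{j}u$ with $u$ a unit is a square modulo $p^{2j}$ (with square root $1+2^{-1}p^{j}u$). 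Applying this with $j=2r-1-2k\ge 1$ and noting $2j\ge 2r-2k$ (since $k\le r-1$), one gets that $n^2+p^{2r-1}$ is a square modulo $p^{2r}$, say $n^2+p^{2r-1}\equiv Z^2\pmod{p^{2r}}$. Multiplying by $C^{2}$ and using $t_2+2r=s$ gives $f(n)\equiv (CZ)^{2}\pmod{p^{s}}$, so $f(1),\dots,f(p^{r}-1)$ are all squares modulo $p^{s}$. Finally $f(p^{r})=p^{s-1}g_2(p+1)$ has $p$-adic valuation $s-1$, which is odd and smaller than $s$, hence $f(p^{r})$ is not a square modulo $p^{s}$; in particular $f$ is not a square, and the sequence attached to this $f$ has length exactly $p^{r}-1$.

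\emph{Upper bound.} Here I would invoke the main results. Theorem \ref{small} reduces the case of a non-square quadratic polynomial with $v_p(f_2)=t_2$ modulo $p^{s}$ to the case of a non-square quadratic polynomial $\widetilde f$ with invertible leading coefficient modulo $p^{s-t_2}$, with the same optimal length; moreover the leading coefficient of $\widetilde f$ is, up to units, $g_2$, hence still a nonzero square modulo $p$. Theorem \ref{thJNT} then supplies the explicit optimal length in the invertible case, and, specialised to the modulus $p^{2r}$ with leading coefficient a nonzero square modulo $p$, this value is $p^{r}-1$. Therefore no admissible non-square $f$ can have $f(1),\dots,f(N)$ all squares modulo $p^{s}$ with $N>p^{r}-1$, which together with the construction yields equality.

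The genuinely non-trivial ingredient is the upper bound: it is not elementary and rests on the reduction of Theorem \ref{small} combined with the explicit prime-power formula of Theorem \ref{thJNT}. Two features of the hypotheses do the work there: $t_2$ being even is exactly what turns $p^{t_2}$ into a square (so that squareness modulo $p^{s}$ transfers cleanly to squareness modulo $p^{s-t_2}$), and ``$g_2$ a square modulo $p$'' is what makes the general formula of Theorem \ref{thJNT} collapse to the single clean value $p^{(s-t_2)/2}-1$. The construction above, by contrast, is only a short computation with $p$-adic valuations, the one point requiring care being the case distinction according to the power of $p$ dividing $n$.
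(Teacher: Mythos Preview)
Your explicit lower-bound construction is correct and in fact generalizes the paper's illustrative example $f(x)=p^2x^2+p^3$ modulo $p^4$. However, it is redundant: Theorem \ref{small} is an \emph{equality}, and Theorem \ref{thJNT}\ref{thJNT2} gives $\optg(p^{s-t_2},g_2)=p^r$ as an equality too, so both bounds fall out simultaneously from these two theorems. This is exactly how the paper proceeds (one sentence, citing Theorems \ref{small} and \ref{thJNT}, items \ref{thJNT2} and \ref{thJNT4}).

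There is a genuine gap in your upper-bound argument. You paraphrase Theorem \ref{small} as reducing the problem ``to a non-square quadratic polynomial $\widetilde f$ with invertible leading coefficient modulo $p^{s-t_2}$, with the same optimal length''. That is not what Theorem \ref{small} says: when $t_2$ is even it gives
$$
\optg(p^s,f_2)=\max\{\optg(p,0),\optg(p^{s-t_2},g_2)\},
$$
and the extra term $\optg(p,0)$, coming from polynomials with $\ord_p f_1<t_2$, does not correspond to any $\widetilde f$ with invertible leading coefficient. You therefore still need to show $\optg(p,0)\le p^r$. This is supplied by Theorem \ref{thJNT}\ref{thJNT4}, which gives $\optg(p,0)\le\frac{p+3}{2}\le p\le p^r$ since $r\ge 1$ and $p\ge 3$; only then does the maximum collapse to $p^r$ and the upper bound $N\le p^r-1$ follow. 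Once you add that sentence, your argument matches the paper's.
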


Let us give a concrete example. For any odd prime $p$, modulo $p^4$, the polynomial $f(x)=p^2x^2+p^3$ is not the square of a polynomial, because $p^3$ is not a square, but by Hensel's lemma, it is easy to see that $f(k)$ is a square modulo $p^4$ for $k=1,\dots,p-1$, so the length $p^{\frac{4-2}{2}}-1=p-1$ is reached. Note that $f(0)$ and $f(p)$ are not squares modulo $p^4$. 

Though many analogous results exist in the literature over different type of rings $R$, they always assume that the polynomial $f$ is monic, with just one exception: Natalia Garcia-Fritz \cite[Thm. 1.6, Cor. 1.7 and the comments that follow]{GarciaFritz17} does not put restrictions on $f_2$ (unconditionally if $R$ is a function field of a curve over $\C$, and assuming the Bombieri-Lang conjecture when $R=\Q$). There is some literature on sequences of squares whose second difference is an arbitrary element of $R$, which corresponds essentially to considering a quadratic $f$ with an arbitrary dominant coefficient. Symmetric sequences of that kind were considered by Allison \cite{Allison86}, Bremner \cite{Bremner03}, Browkin and Brzezi\'nski \cite{BrowkinBrzezinski06}, and Gonzalez-Jimenez and Xarles \cite{GonzalezJimenezXarles11}. 

Analogues of Question \ref{quest} have been considered for most classical rings (but in the case of number fields, under some well-known conjectures, like Bombieri-Lang for surfaces, or some version of ABC). Relevant results in positive characteristic can be found in \cite{Pasten11} (the analogue of B\"uchi's problem for any power over fields with a prime number of elements), and in \cite{PastenWang15} (over rings of functions), who generalize previous results in \cite{PheidasVidaux06,PheidasVidaux10,ShlapentokhVidaux10,AnWang11,AnHuangWang13}. For a general survey on B\"uchi's problem and its extensions to other structures and higher powers, see \cite{PastenPheidasVidaux10}.

\section{Preliminaries and Main result}

If $n$ is an integer, $[n]_{m}$ will denote its residue class modulo $m$ (we may use the brackets notation for polynomials and for sequences as well), and if $p$ is a prime, $\ord_pn$ will stand for the usual order at $p$ of $n$, with the convention $\ord_p(0)=\infty$, so that for every integer $x$ we have, $\ord_px<\infty$ if and only if $x\ne0$. 

From now on, we will only consider sequences $\gb_{f}^N$ over $\Z$ which satisfy the two following conditions for some odd integer $m\ge3$:
\begin{enumerate}
\item[(C1)] $f(1)$, \dots, $f(N)$ are squares modulo $m$. 
\item[(C2)] $f$ is not the square of a polynomial modulo $m$. 
\end{enumerate}

 Following \cite{SaezVidauxVsemirnov15},  sequences $\gb_{f}^N$ satisfying (C1) are called \emph{$f$-B\"uchi sequences modulo $m$}, and they are called \emph{non-trivial} if (C2) is also satisfied (we may say just ``B\"uchi'' instead of ``$f$-B\"uchi'' when it is clear what $f$ is). We should immediately point out that in \cite{SaezVidauxVsemirnov15} we consider $f$-B\"uchi sequences as trivial when $f$ is the square of a polynomial of degree at most $1$. But indeed, when $f_2$ is invertible this makes no difference with condition (C2), as shown by the following proposition, which will be proved at the beginning of the next section. 

\begin{proposition}\label{PropTrivial}
Let $p$ be an odd prime and $s$ be a positive integer. Let $f=f_2X^2+f_1X+f_0\in\Z[X]$. The following statements are equivalent: 
\begin{enumerate}
\item\label{PropTrivial1} The polynomial $f$ is the square of a polynomial modulo $p^s$. 
\item\label{PropTrivial2} Either $\ord_pf_0<\min\{\ord_pf_1,\ord_pf_2\}$ and $[f_0]_{p^s}$ is a square, or $f$ is the square of a polynomial modulo $p^s$ whose degree is at most one. 
\end{enumerate}
\end{proposition}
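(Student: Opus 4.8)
The plan is to prove the two implications separately. The second alternative in statement (2) --- that $f$ is the square of a polynomial of degree at most one modulo $p^s$ --- trivially gives statement (1), and conversely if $f\equiv g^2\pmod{p^s}$ for some $g$ of degree $\le 1$ then $f$ already satisfies that alternative; so the substance is (a) the condition ``$\ord_pf_0<\min\{\ord_pf_1,\ord_pf_2\}$ and $[f_0]_{p^s}$ a square'' implies statement (1), and (b) if statement (1) holds but $f$ is not the square of any polynomial of degree $\le 1$ modulo $p^s$, then that condition holds. For (a), set $t=\ord_pf_0$. If $t\ge s$ then $\ord_pf_1,\ord_pf_2>t\ge s$, so $f\equiv 0\equiv 0^2\pmod{p^s}$. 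If $t<s$, then since $p$ is odd the hypothesis that $[f_0]_{p^s}$ is a square forces $t$ to be even, say $t=2k$, and $u:=f_0/p^t$ to be a quadratic residue modulo $p$. Put $z=(f_1X+f_2X^2)/f_0$; because $\ord_pf_1,\ord_pf_2>t$, this lies in $\Z_{(p)}[X]$ with every coefficient divisible by $p$. I would then form the truncated binomial series $h=\sum_{n=0}^{s-1}\binom{1/2}{n}z^n$: as $p$ is odd every $\binom{1/2}{n}$ is $p$-integral, and $z^n$ has all coefficients divisible by $p^n$, so $h^2\equiv 1+z\pmod{p^s}$. Choosing $w\in\Z$ with $w^2\equiv u\pmod{p^s}$ by Hensel's lemma, we get $(wh)^2\equiv u(1+z)=f/p^t\pmod{p^s}$, hence $f\equiv p^t(wh)^2=(p^kwh)^2\pmod{p^s}$; since the denominators occurring in $h$ are prime to $p$, this congruence can be rewritten with an honest polynomial over $\Z$.

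For (b), suppose $f\equiv g^2\pmod{p^s}$; replacing $g$ by its reduction modulo $p^s$, we may assume $\deg g=d$ with $p^s\nmid g_d$, and if $d\le 1$ we are done, so assume $d\ge 2$ and that no polynomial of degree $\le 1$ squares to $f$ modulo $p^s$. Let $c$ be the $p$-adic valuation of the content of $g$ (so $c<s$) and $j$ the least index with $\ord_pg_j=c$. First I would show $j\in\{0,1\}$: if $j\ge 2$, the coefficient of $X^{2j}$ in $g^2$ has valuation exactly $2c$ because the term $g_j^2$ strictly dominates the rest, but $2j\ge 3$ forces this coefficient to be $\equiv 0\pmod{p^s}$, so $2c\ge s$, and then every coefficient of $g^2$, hence of $f$, vanishes modulo $p^s$, contradicting that the zero polynomial does not square to $f$. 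Next, using $f_i=0$ for $i\ge 3$ together with $2c<s$, one deduces that $\overline{g/p^c}\in\F_p[X]$ has degree $\le 1$, so $g_i/p^c$ is divisible by $p$ for every $i\ge 2$.

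It remains to handle $j=1$ and $j=0$. If $j=1$, then $\overline{g/p^c}=aX$ for some $a\in\F_p^\times$, so by Hensel $g$ has a simple root $\rho\in p\Z_p$, whence $(X-\rho)^2\mid g^2$ in $\Z_p[X]$; this forces $f(\rho)\equiv f'(\rho)\equiv 0\pmod{p^s}$, and the exact Taylor expansion $f=f(\rho)+f'(\rho)(X-\rho)+f_2(X-\rho)^2$ gives $f\equiv f_2(X-\rho)^2\pmod{p^s}$. A short valuation computation shows $f_2$ is a square modulo $p^s$ in this case, so $f$ is the square of a linear polynomial over $\Z_p$, hence over $\Z$ modulo $p^s$ --- contradiction. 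If $j=0$, then $g_0/p^c$ is a unit, so $\ord_pf_0=2c$ and $[f_0]_{p^s}$ is a square; writing $g/p^c=\ell+p^\nu\tilde\gamma$ with $\ell$ the degree-$\le 1$ truncation, $\nu\ge 1$, and $\overline{\tilde\gamma}\ne 0$, one checks that if $2c+\nu\ge s$ then $f\equiv(p^c\ell)^2\pmod{p^s}$, again a linear square and a contradiction; while if $2c+\nu<s$, matching the coefficients of $X^m$ for $m\ge 3$ modulo $p$ forces $\overline\ell$ to be a nonzero constant and $\overline{\tilde\gamma}$ to be a nonzero scalar times $X^2$, so that $g_1/p^c$ and $g_2/p^c$ are both divisible by $p$, whence $\ord_pf_1>2c=\ord_pf_0$ and $\ord_pf_2>2c=\ord_pf_0$ --- exactly the desired condition.

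I expect the main obstacle to be the case $j=0$ of part (b), and its counterpart inside part (a): there $g$ may carry the minimal valuation on its constant coefficient while still having a genuinely nonlinear tail, and excluding this requires tracking simultaneously the content $p^\nu$ of the higher-degree part of $g/p^c$, the reductions modulo $p$ of its linear and tail parts, and the comparison of $2c+\nu$ with $s$, then verifying that in every configuration $f$ either collapses to the square of a linear polynomial or acquires a strictly dominant constant term. I also anticipate routine care for the degenerate cases in which one of $f_0,f_1,f_2$ vanishes (so some valuation is infinite), and for passing between polynomials over $\Z$, $\Z_{(p)}$ and $\Z_p$ modulo $p^s$.
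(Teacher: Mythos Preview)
Your argument is correct. The direction (2)$\Rightarrow$(1) is essentially the paper's argument: both of you write $f=f_0(1+z)$ with $z\in p\Z_{(p)}[X]$ and take a truncated binomial square root.

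For (1)$\Rightarrow$(2) the overall skeleton matches the paper's (pull out the $p$-content $p^c$ of $g$, observe that $\overline{g/p^c}$ has degree $\le 1$ over $\F_p$, then split cases), but your case analysis is organized by the index $j$ of the first minimal-valuation coefficient rather than by whether $p\mid g_1$, and the substantive step is handled differently. In the paper's Case~2 ($p\nmid g_1$) with $2u+v<s$, they multiply $f-p^{2u}\ell^2$ by a geometric-type factor $(2\ell)^{s-1}-(2\ell)^{s-2}p^vhX^2+\cdots$ and compare the coefficient of $X^{2+d+s}$ to obtain a contradiction. Your $j=1$ branch avoids this computation entirely: you apply Hensel's lemma to $g/p^c$ (whose reduction is $aX$) to produce a genuine root $\rho\in p\Z_p$ of $g$, Taylor-expand $f$ at $\rho$ to get $f\equiv f_2(X-\rho)^2\pmod{p^s}$, and then the promised ``short valuation computation'' is just that $f_2\equiv g_1^2+2g_0g_2\pmod{p^s}$ has $p$-order exactly $2c$ with $f_2/p^{2c}\equiv (g_1/p^c)^2\pmod p$, so $f_2$ is a square modulo $p^s$ and $f$ is the square of a linear polynomial, contradicting the standing assumption. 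This is cleaner and more conceptual than the paper's coefficient-chasing, at the cost of passing through $\Z_p$. Your $j=0$ branch with $2c+\nu<s$ recovers exactly the paper's Case~1 conclusion via the observation that $\overline\ell\cdot\overline{\tilde\gamma}$ has degree $\le 2$ while $\overline{\tilde\gamma}$ is a nonzero multiple of $X^2$, forcing $\overline\ell$ constant; this is equivalent to, but more transparently stated than, the paper's computation. The degenerate cases you flag (some $f_i=0$, passage $\Z\leftrightarrow\Z_{(p)}\leftrightarrow\Z_p$) are genuinely routine.
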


For odd $m\ge3$, let us write $\mlg(m,f_2,f_1)$ for
$$
\max_{f_0} \left\{ N \colon \left[\gb_{f}^N\right]_m \textrm{ is a non-trivial B\"uchi sequence, where } f=f_2X^2+f_1X+f_0\right\},
$$
(with the convention $\mlg(m,f_2,f_1)=0$ if all the sequences in the set are trivial), and 
$$
\optg(m,f_2,f_1)=\mlg(m,f_2,f_1)+1.
$$
Also, we will write 
$$
\mlg(m,f_2)=\max_{f_1}\mlg(m,f_2,f_1)\qquad\textrm{and}\qquad
\optg(m,f_2)=\mlg(m,f_2)+1.
$$
Note that when $m=p$ is prime, we trivially have
$$
\mlg(p,0)=\max_{f_1}\mlg(p,0,f_1)=\max\{\mlg(p,0,f_1)\colon [f_1]_p\ne[0]_p\}.
$$
Here ``$\ml$'' stands for ``maximal length'' and ``$\opt$'' stands for ``optimal bound''. The reason to use both concepts is that the proofs are done in terms of maximal lengths but the formulas that we need from \cite{SaezVidauxVsemirnov15} are nicer in terms of the optimal bound (the reader will see the point in Theorem \ref{thJNT} below). 

We can now state our main theorem.

\begin{theorem}\label{small}
Let $p$ be a prime $\ge3$ and $s$ be a positive integer. Let $f_2\in\Z$, with $f_2\notin p^s\Z$ unless $f_2=0$. Write $t_2=\ord_pf_2$ and let $g_2$ be such that $f_2=p^{t_2}g_2$ when $f_2\ne0$, and $g_2=0$ otherwise. Assume $t_2\ne0$. We have  
$$
\optg(p^s,f_2)=
\begin{cases}
\optg(p,0)&\textrm{ if $t_2$ is odd or $t_2=\infty$,}\\
\max\{\optg(p,0),\optg(p^{s-t_2},g_2)\}&\textrm{ if $t_2$ is even.}
\end{cases}
$$
\end{theorem}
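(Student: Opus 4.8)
Throughout I will use that for an odd prime $p$ a unit of $\Z/p^s\Z$ is a square iff it is a square modulo $p$, and that $p^ju$ with $u$ a unit and $0\le j<s$ is a square modulo $p^s$ iff $j$ is even and $u$ is a square modulo $p$. I will prove ``$\le$'' and ``$\ge$'' separately. Fix $f=f_2X^2+f_1X+f_0$ giving a non-trivial B\"uchi sequence $\gb_f^N$ modulo $p^s$, set $v_i=\ord_pf_i$, and recall $t_2<s$ (since $f_2\notin p^s\Z$ when $f_2\ne0$). The case $f_2=0$ ($t_2=\infty$) is the linear case and is handled by the same bookkeeping as Case I below, so assume $f_2\ne0$. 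Everything splits along whether $\min\{v_0,v_1\}<t_2$ (Case I) or $\min\{v_0,v_1\}\ge t_2$ (Case II).

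\textbf{Case I: the sequence is short.} Put $w=\min\{v_0,v_1\}<t_2$ and write $f=p^w\tilde f$ with $\tilde f=\tilde f_2X^2+\tilde f_1X+\tilde f_0$; then $\ord_p\tilde f_2=t_2-w\ge1$ and $\min\{\ord_p\tilde f_0,\ord_p\tilde f_1\}=0$, so at least one of $\tilde f_0,\tilde f_1$ is a unit. If $\tilde f_1$ is a unit, then modulo $p$ one has $\tilde f(k)\equiv\tilde f_1k+\tilde f_0$; when $w$ is odd, squareness of $f(k)=p^w\tilde f(k)$ forces $\tilde f_1k+\tilde f_0\equiv0\pmod p$ for every $k\le N$, which is impossible for $N\ge2$, so $N\le1$; when $w$ is even, $\tilde f(k)$ is a square modulo $p$ for $k=1,\dots,N$, so the linear polynomial $\tilde f_1X+\tilde f_0$ (unit leading coefficient) gives a non-trivial B\"uchi sequence modulo $p$ of length $N$, whence $N\le\mlg(p,0)$. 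If instead $\tilde f_1\equiv0\pmod p$ (so $\tilde f_0$ is a unit), then $\tilde f(k)\equiv\tilde f_0\pmod p$ for all $k$; if $w$ is odd no value $f(k)$ is a square, and if $w$ is even then Proposition~\ref{PropTrivial} (using $v_0=w<\min\{v_1,t_2\}$) shows non-triviality forces $\tilde f_0$ to be a non-square, which again kills all values; so $N=0$. In all of Case I, $N+1\le\optg(p,0)$ (using $\mlg(p,0)\ge1$, e.g.\ via $X-1$).

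\textbf{Case II: the reduction.} Here $v_0,v_1\ge t_2$, so $f=p^{t_2}g$ with $g=g_2X^2+g_1X+g_0$, $g_2$ a unit. If $t_2$ is odd, then for any $k$ with $g(k)\not\equiv0\pmod p$ we get $\ord_pf(k)=t_2<s$ odd, so $f(k)$ is not a square; hence every $k\in\{1,\dots,N\}$ must reduce mod $p$ to a root of $g$, and since $g\bmod p$ is a nonzero quadratic this gives $N\le2$ for $p\ge5$; for $p=3$ one additionally notes that $g(3)$ a unit would force $\ord_3f(3)=t_2$ odd, so $g_0\equiv0\pmod3$, adding $0$ as a third root of $g\bmod3$ and forcing the same contradiction — so again $N\le2$, hence $N+1\le3\le\optg(p,0)$. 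If $t_2$ is even, write $t_2=2e$; then $f(k)=p^{2e}g(k)$ is a square modulo $p^s$ iff $g(k)$ is a square modulo $p^{s-t_2}$, and, since $g_2$ is a unit, Proposition~\ref{PropTrivial} reduces triviality of both $f$ (mod $p^s$) and $g$ (mod $p^{s-t_2}$) to being the square of a polynomial of degree $\le1$, which by an elementary $p$-adic comparison of the relations $u^2\equiv p^{t_2}g_2$, $2uv\equiv p^{t_2}g_1$, $v^2\equiv p^{t_2}g_0$ (forcing $\ord_pu=\ord_pv=e$) happens for $f$ iff it happens for $g$. Thus $f\mapsto f/p^{t_2}$ is a length-preserving bijection between non-trivial $f$-B\"uchi sequences mod $p^s$ with $v_0,v_1\ge t_2$ and non-trivial $g$-B\"uchi sequences mod $p^{s-t_2}$ with leading coefficient $g_2$, giving $N+1\le\optg(p^{s-t_2},g_2)$. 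Combining the two cases yields the claimed upper bounds.

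\textbf{Lower bounds.} For $\optg(p^s,f_2)\ge\optg(p,0)$: take a linear $\ell(X)=aX+b$ ($a$ a unit) realizing $\mlg(p,0)=:N$ modulo $p$, let $k_0$ be the unique $k$ with $\ell(k)\equiv0\pmod p$ if it lies in $\{1,\dots,N\}$, and set $f_1=a$, $f_2=p^{t_2}g_2$, and $f_0\equiv-p^{t_2}g_2k_0^2-ak_0\pmod{p^s}$ (take $f_0=b$ if no such $k_0$). Then $f(k_0)\equiv0\pmod{p^s}$, while for $k\ne k_0$ one has $f(k)\equiv\ell(k)\pmod p$, a nonzero square, hence $f(k)$ is a square modulo $p^s$; and $f$ is non-trivial because $\ord_pf_1=0$ and $\ord_pf_2=t_2\ge1$ defeat both alternatives of Proposition~\ref{PropTrivial}. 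When $t_2$ is even, the bijection of Case II run backwards lifts a length-$\mlg(p^{s-t_2},g_2)$ non-trivial $g$-B\"uchi sequence (leading coefficient $g_2$) to $f=p^{t_2}g$, giving $\optg(p^s,f_2)\ge\optg(p^{s-t_2},g_2)$. Matching with the upper bounds proves the theorem. The main obstacle is the Case~II, $t_2$ even step: verifying cleanly, via Proposition~\ref{PropTrivial}, that non-triviality is preserved under division by $p^{t_2}$ — in particular the $p$-adic bookkeeping showing a degree-$\le1$ square for $p^{t_2}g$ modulo $p^s$ forces a degree-$\le1$ square for $g$ modulo $p^{s-t_2}$ — together with pinning down the exceptional small-$p$ (notably $p=3$) behavior in the $t_2$ odd subcase.
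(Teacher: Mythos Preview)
Your argument is correct and follows essentially the same route as the paper. Both proofs factor out the largest relevant power of $p$ from $f$ and reduce either to a linear polynomial modulo $p$ (your Case~I, the paper's Lemmas~\ref{lemRedInvf0NS} and~\ref{lemRedInvmint1}) or to a quadratic with invertible leading coefficient modulo $p^{s-t_2}$ (your Case~II, the paper's Lemma~\ref{lemRedInvMin}); both use Proposition~\ref{PropTrivial} to track non-triviality through this reduction, and both construct the lower-bound sequences by lifting a maximal linear B\"uchi sequence modulo $p$ via Hensel (adjusting $f_0$ so that the unique zero value becomes an exact zero). The only organizational difference is that the paper packages these steps as intermediate equalities for the three-argument function $\optg(p^s,f_2,f_1)$ and then maximizes over $f_1$, while you argue directly at the level of a single non-trivial sequence.

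A few small points to tighten. In Case~II with $t_2$ odd, your separate $p=3$ discussion is unnecessary: $1,2,3$ are already distinct modulo any prime $p\ge3$, so the ``at most two roots'' argument gives $N\le2$ uniformly. You assert $3\le\optg(p,0)$ without justification; the paper handles this via Lemma~\ref{revival}, but the example $f(X)=X-1$ (with $f(1)=0$, $f(2)=1$) already shows $\mlg(p,0)\ge2$ for all odd $p$. Finally, in the even-$t_2$ bijection your claim $\ord_pv=e$ is slightly too strong --- one only gets $\ord_pv\ge e$ from $2uv\equiv p^{2e}g_1$ --- but writing $v=p^ev'$ still yields $(uX+v')^2\equiv g\pmod{p^{s-t_2}}$, so the conclusion stands.
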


In this paper, we deal only with prime power modulus. The case of a general modulus $m$ can be reduced to the case of powers of primes following the strategy described in \cite[Section 5.1]{SaezVidauxVsemirnov15}. Any B\"uchi sequence modulo $m=p_1^{s_1}\dots p_k^{s_k}$ can be glued from B\"uchi sequences modulo the $p_i^{s_i}$ using the Chinese Remainder Theorem. The only subtle point to take care of is that one has to check that the resulting sequence modulo $m$ is non trivial, so there are various cases to consider, which result in an elementary but cumbersome analysis --- we leave the details to  the reader. 

While Theorem \ref{small} is a natural extension of what we did in our previous work, it was not clear from the beginning what the right statement should be (for example, we were surprised when we discovered that the concept of \emph{triviality} had to be kept unchanged). We have tried to write the proof in the most uniform possible way, instead of doing the obvious case by case analysis. 

In order to have a global picture of the situation, and for later references, we resume in a single theorem what we knew in the case where $f_2$ is invertible. In \cite{SaezVidauxVsemirnov15}, given $\alpha\in\Z$ not divisible by $p$, we had defined $\ml(m,\alpha)$ as
$$
\max_{a,\nu} \left\{ N \colon \left[\gb_{f}^N\right]_m\textrm{ is an $f$-B\"uchi sequence, where } f=\alpha(X+\nu)^2+a \textrm{ and }a\ne0\right\},
$$
and $\opt(m,\alpha)=\ml(m,\alpha)+1$. Note that when $f_2$ is invertible modulo an odd $m\ge3$, then every polynomial $f(X)=f_2X^2+f_1X+f_0$ can be written in a unique way in the form $\alpha(X+\nu)^2+a$ modulo $m$, so the notation in \cite{SaezVidauxVsemirnov15} is compatible with the present one. 

\begin{theorem}\emph{(\cite[Thm. 1.7, Thm. 1.8, Lem. 2.13]{SaezVidauxVsemirnov15})}\label{thJNT}
Let $p$ be an odd prime number, and $s\ge 1$ and $f_2$ be integers. Assume that $[f_2]_{p^s}$ is invertible. 
\begin{enumerate}
\item\label{thJNT1} If $[f_2]_{p^s}$ is a non-square and $p\ge5$, then $\opt(p^s,f_2)=\opt(p,n)<\infty$, where $n$ is any quadratic non-residue modulo $p$. 
\item\label{thJNT2} If $[f_2]_{p^s}$ is a non-zero square and $s=2r$ is even, then $\opt(p^s,f_2)=p^r$. 
\item\label{thJNT3} If $[f_2]_{p^s}$ is a non-zero square and $s=2r+1$ is odd, then $\opt(p^s,f_2)=\opt(p,1)p^r<\infty$. 
\item\label{thJNT4} We have $\opt(p,0)\le\frac{p+3}{2}$.
\item\label{thJNT5} For any $k\in\Z$, we have $\opt(3,2+3k)=\infty$.
\item\label{thJNT6} For any $s\ge 2$ and $k\in\Z$, we have $\opt(3^s,2+3k)=5$. 
\end{enumerate}
\end{theorem}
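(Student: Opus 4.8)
The plan is to reduce every item to a statement about runs of consecutive values of a single one-variable quadratic. Since $[f_2]_{p^s}$ is invertible I complete the square and write $f=\alpha(X+\nu)^2+a$ with $\alpha=f_2$ and $a\neq 0$; by Proposition~\ref{PropTrivial}, for invertible $f_2$ non-triviality is \emph{exactly} the condition $a\neq 0$. Putting $x=X+\nu$, the sequence $\gb_f^N$ is an $f$-B\"uchi sequence if and only if $\alpha x^2+a$ is a square modulo $p^s$ for the $N$ consecutive residues $x=1+\nu,\dots,N+\nu$, so $\ml(p^s,\alpha)$ is the maximum over $a\neq 0$ of the length $L(\alpha,a)$ of the longest run of consecutive $x$ with $\alpha x^2+a$ a square. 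Two remarks are used throughout: (i) $L(\lambda^2\alpha,\lambda^2 a)=L(\alpha,a)$ for any unit $\lambda$, since $\lambda^2(\alpha x^2+a)$ is a square exactly when $\alpha x^2+a$ is, so $L$ depends on $\alpha$ only through its class modulo squares; and (ii) for odd $p$ the unit $1+c\,p^{r}$ is a square modulo $p^{2r}$ for every $c$, because $(1+\tfrac{c}{2}p^{r})^2\equiv 1+c\,p^{r}\pmod{p^{2r}}$. Item~\ref{thJNT4} is then immediate: when $f_2\equiv 0$ the sequence reduces modulo $p$ to a linear one, so for a window of length $L\le p$ the values are $L$ distinct residues that must all be squares; as there are only $(p+1)/2$ squares modulo $p$ (counting $0$) we get $L\le(p+1)/2$, hence $\opt(p,0)\le(p+3)/2$.

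For the square cases (items~\ref{thJNT2} and~\ref{thJNT3}) I write $\alpha=\gamma^2$ and, by remark (i), reduce to $\alpha=1$, i.e.\ to runs of consecutive $x$ with $x^2+a$ a square. For the even case $s=2r$ I would exhibit the extremal example by taking $a=p^{2r-1}b$ with $b$ a unit: writing $x=p^{j}m$ with $m$ a unit and $j\le r-1$ gives $x^2+a=p^{2j}\bigl(m^2+p^{2r-1-2j}b\bigr)$, whose cofactor is a unit congruent to $m^2$ modulo $p$ and hence a square, so $x^2+a$ is a square for every $x$ with $p^{r}\nmid x$; at the multiples of $p^{r}$ one has $x^2+a\equiv p^{2r-1}b$, of odd valuation and therefore not a square. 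The square positions are thus exactly the non-multiples of $p^r$, whose maximal consecutive run has length $p^r-1$, giving $\opt(p^{2r},f_2)\ge p^r$. The matching upper bound is the technical core: one shows that for \emph{every} $a\neq 0$ any window of $p^r$ consecutive $x$ contains a value $x^2+a$ of non-square type, by a valuation analysis organised according to $\ord_p a$. For odd $s=2r+1$ I would glue this vertical factor $p^r$ to the residue-field count: an optimal run modulo $p$ of length $\ml(p,1)=\opt(p,1)-1$ lifts, in blocks of length $p^r$ (filled using remark (ii)), to a run modulo $p^{2r+1}$ of length $\opt(p,1)\,p^r-1$, while reducing first modulo $p$ and then within each block supplies the upper bound $\opt(p^{2r+1},f_2)=\opt(p,1)\,p^r$.

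For the non-square case with $p\ge 5$ (item~\ref{thJNT1}) the mechanism is the absence of any vertical gain. When $\alpha$ is a non-square the equation $y^2=\alpha x^2$ has only the zero solution modulo $p$, so the conic $y^2=\alpha x^2+a$ is anisotropic and there is no direction along which a solution can be deformed. Concretely, wherever $\alpha x^2+a\not\equiv 0\pmod p$ the quantity is a unit, and a unit is a square modulo $p^s$ if and only if it is a square modulo $p$ (Hensel); hence a run modulo $p^s$ projects to a run modulo $p$ of equal length and, conversely, every mod-$p$ run lifts by fixing a unit lift of $a$. This yields $\opt(p^s,f_2)=\opt(p,\,\alpha\bmod p)$, and by remark (i) this common value is $\opt(p,n)$ for any quadratic non-residue $n$. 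The delicate point is the (at most two) positions per period where $\alpha x^2+a\equiv 0\pmod p$, at which Hensel degenerates; the hypothesis $p\ge 5$ is exactly what prevents these from lengthening a run.

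Finally, the anomaly $p=3$ (items~\ref{thJNT5} and~\ref{thJNT6}) is where those degenerate positions \emph{do} help. Modulo $3$ the value $\alpha x^2+a$ depends only on $x\bmod 3$, taking the values $a$ (at $x\equiv 0$) and $\alpha+a$ (at $x\equiv\pm1$); choosing $a=1$ and $\alpha\equiv 2$ makes both squares ($1$ and $0$), so every value is a square and the sequence never breaks, giving $\opt(3,2+3k)=\infty$. For $s\ge 2$ this degeneracy no longer lifts, and I would finish by an explicit computation modulo $9$ showing the longest run is $4$, together with a stabilisation argument in the spirit of item~\ref{thJNT1} giving $\opt(3^s,2+3k)=5$ for all $s\ge 2$. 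I expect the main obstacle to be precisely the two exact upper bounds that match the constructions—above all the uniform claim for item~\ref{thJNT2} that every window of $p^r$ consecutive $x$ meets a non-square value of $x^2+a$—hand in hand with the careful analysis of the points $\alpha x^2+a\equiv 0\pmod p$, which is simultaneously what forces $p\ge 5$ in item~\ref{thJNT1} and what produces the $p=3$ anomaly.
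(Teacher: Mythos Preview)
This theorem is not proved in the present paper: it is quoted from \cite{SaezVidauxVsemirnov15} (see the citation in the theorem header and the sentence just before it, ``we resume in a single theorem what we knew in the case where $f_2$ is invertible''). There is consequently no proof here to compare your attempt against.

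As a freestanding sketch, your outline is architecturally sound---complete the square, reduce via quadratic twist to $\alpha\in\{1,n\}$, build explicit extremal examples (your choice $a=p^{2r-1}b$ for item~\ref{thJNT2} is correct and gives a run of length $p^r-1$), and treat $p=3$ separately---but it is explicitly a plan rather than a proof. You yourself label the matching upper bound in item~\ref{thJNT2} ``the technical core'' and do not supply it, and you likewise defer the analysis of the degenerate points $\alpha x^2+a\equiv 0\pmod p$ in item~\ref{thJNT1}; these are exactly where the substantive work in \cite{SaezVidauxVsemirnov15} lies. One small inaccuracy: your assertion that ``for invertible $f_2$ non-triviality is exactly the condition $a\neq 0$'' fails when $\alpha$ is a non-square, since then $\alpha(X+\nu)^2$ is not the square of any polynomial modulo $p^s$ (Proposition~\ref{PropTrivial}); the definition of $\ml(m,\alpha)$ recalled just before the theorem simply \emph{imposes} $a\neq 0$ by fiat, which is subtly different.
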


We get Theorem \ref{corsmall} by first applying Theorem \ref{small} and then Theorem \ref{thJNT}, items \ref{thJNT2} and \ref{thJNT4}. For other cases, it is clear how similar corollaries can be obtained.

\section{Reduction to the case when $[f_2]_{p^s}$ is invertible or is $[0]_{p^s}$}

We will frequently use the following well known fact.

\begin{lemma}\label{cor0BSx0Inv}
Let $p$ be an odd prime number. If $y\in\Z$ is a non-zero square modulo $p^t$ for some $t\geq1$, then $y$ is a square modulo $p^s$ for any $s\geq 1$.
\end{lemma}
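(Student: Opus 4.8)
The statement says that if $y$ is a non-zero square modulo $p^t$ for some $t \geq 1$, then $y$ is a square modulo $p^s$ for every $s \geq 1$. The plan is to reduce to the structure of squares in $p$-adic terms by writing $y = p^{2k} u$ where $u$ is a $p$-adic unit, i.e.\ $p \nmid u$. First I would observe that the hypothesis ``$y$ is a non-zero square modulo $p^t$'' must be unpacked carefully: it means there is an integer $x$ with $p^t \mid y - x^2$ and $y \not\equiv 0 \pmod{p^t}$. I would argue that this forces $\ord_p y$ to be even and strictly less than $t$ (if $\ord_p y \geq t$ then $y \equiv 0$, contradiction), and moreover that $\ord_p y = \ord_p x^2 = 2\ord_p x$, so we may write $y = p^{2k} u$ with $k = \ord_p x$ and $u$ a unit, and correspondingly $x = p^k v$ with $p \nmid v$. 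Dividing the congruence $p^t \mid p^{2k} u - p^{2k} v^2$ by $p^{2k}$ gives $p^{t - 2k} \mid u - v^2$, so $u$ is a square modulo $p^{t-2k}$, with $t - 2k \geq 1$.

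Next I would reduce the whole problem to units: it suffices to show that $u$ (a unit that is a square mod $p^{t'}$ for some $t' \geq 1$) is a square modulo $p^{s'}$ for every $s' \geq 1$, since then $y = p^{2k} u$ is a square mod $p^{s}$ for every $s$ (for $s \leq 2k$ it is $\equiv 0$ hence a square; for $s > 2k$ write $y = p^{2k} u$ with $u$ a square mod $p^{s - 2k}$). So the core is the standard fact that the squares among the units of $\Z/p^{s'}\Z$ for odd $p$ are exactly the units that are squares mod $p$, equivalently mod any fixed power. The cleanest route is Hensel's lemma: a unit $u$ with $u \equiv w^2 \pmod p$ (which follows from $u$ being a square mod $p^{t'}$, $t' \geq 1$, by reducing mod $p$) lifts to a square root modulo every $p^{s'}$, because the derivative of $X^2 - u$ at $w$ is $2w$, which is a unit since $p$ is odd and $p \nmid w$. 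Hensel then produces for each $s'$ an integer $w_{s'}$ with $w_{s'}^2 \equiv u \pmod{p^{s'}}$.

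The only mild subtlety — and the step I'd flag as the main thing to get right rather than a real obstacle — is the bookkeeping at the start: making sure the $p$-adic order of $y$ is even and extracting the unit part correctly from the hypothesis, since the case $\ord_p y \geq t$ must be excluded by the ``non-zero mod $p^t$'' clause, and the case $2k \geq s$ in the conclusion must be handled separately (there $y \equiv 0 \pmod{p^s}$, which is a square). Everything after the reduction to units is a direct invocation of Hensel's lemma for $X^2 - u$ over $\Z_p$ with $p$ odd, so no genuinely hard estimate or construction is involved; the lemma is labelled ``well known'' precisely because this is the textbook structure theory of $(\Z/p^s\Z)^\times$.
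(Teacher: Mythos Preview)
Your argument is correct and is the standard proof: extract the $p$-adic valuation to see it is even, reduce to the unit part, and lift via Hensel's lemma using that $p$ is odd. The paper itself does not give a proof of this lemma at all --- it is introduced with the phrase ``We will frequently use the following well known fact'' and stated without proof --- so there is nothing to compare against; your write-up supplies exactly the routine justification the paper omits.
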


Next, we prove Proposition \ref{PropTrivial}. 

\begin{proof}[Proof of Proposition \ref{PropTrivial}]
We first prove that \ref{PropTrivial2} implies \ref{PropTrivial1}. Assume 
$$
\ord_pf_0<\min\{\ord_pf_1,\ord_pf_2\}
$$ 
and $[f_0]_{p^s}$ is a square. If $f$ is identically $0$ modulo $p^s$, then the claim is trivial, so we may assume that $f_0$ is not $0$ modulo $p^s$. We have 
$$
f\equiv(p^rg_0)^2+p^{2r+1}Xg\equiv (p^rg_0)^2(1+pXh)\pmod{p^s}
$$ 
for some $g\in\Z[X]$, $g_0\in\Z$ not divisible by $p$, and $h\in\Z[X]$ such that $g_0^2h\equiv g\pmod{p^s}$. The Taylor series modulo $p^s$ of the square root of $1+pXh$ is actually a polynomial, since denominators are powers of $2$ and numerators have increasing order at $p$. 

We now prove that \ref{PropTrivial1} implies \ref{PropTrivial2}. Assume $s\ge2$ (indeed, for $s=1$, the claim is trivial as $\F_p$ is an integral domain). Let $\varphi\in\Z[X]$ be such that $[f]_{p^s}=[\varphi^2]_{p^s}$. We can assume $[\varphi]_{p^s}\ne[0]_{p^s}$. Let $u$ be the largest integer such that $\varphi=p^ug$ and $g\in\Z[X]$, so that $[g]_{p}\ne[0]_{p}$. We have $f\equiv\varphi^2\equiv p^{2u}g^2\pmod{p^s}$. If $2u\ge s$, there is nothing to prove, so we can assume 
$$
2u<s,
$$
hence $2u+1\le s$. Let $\tilde f=\tilde f_0+\tilde f_1X+\tilde f_2X^2\in\Z[X]$ be such that $p^{2u}\tilde f=f$ and $[g^2]_p=[\tilde f]_p$. So $[g^2]_p$ has degree at most $2$, and since $[g]_p\ne[0]_p$, we deduce that $[g]_p$ has degree at most $1$ (because we are now over the integral domain $\F_p$). So we have
$$
g=g_0+g_1X+p^v hX^2,
$$
for some $v\ge1$ and some $h\in\Z[X]$. If $h$ is the zero polynomial, then we are done. Otherwise choose $v$ as large as possible, so that $h$ has at least one coefficient not divisible by $p$, namely, $[h]_p\ne[0]_p$. We have then
$$
\varphi=p^u(g_0+g_1X+p^v hX^2). 
$$

\emph{Case 1:} Assume that $p$ divides $g_1$, so that $p$ does not divide $g_0$. We have then $\varphi=p^ug_0+p^{u+1}k_0$ for some $k_0\in\Z[X]$, hence $f\equiv\varphi^2\equiv p^{2u}g_0^2+p^{2u+1}k_1 \pmod{p^s}$ for some $k_1\in\Z[X]$, so 
$$
2u=\ord_pf_0<\min\{\ord_pf_1,\ord_pf_2\}.
$$ 

\emph{Case 2:} Assume that $p$ does not divide $g_1$. Write $\ell=g_0+g_1X$, so that
$$
f\equiv p^{2u}(\ell^2+p^v hX^2(2\ell+p^v hX^2))\pmod{p^s},
$$
hence 
\begin{equation}\label{eqow}
f-p^{2u}\ell^2\equiv p^{2u+v}hX^2(2\ell+p^vhX^2)\pmod{p^s}.
\end{equation}
If $2u+v\ge s$, then we are done (since $\ell$ has degree $1$), so it remains to consider the case where 
$$
2u+v<s,
$$ 
which will turn out to be impossible. Multiplying both sides of Equation \eqref{eqow} by 
$$
(2\ell)^{s-1}-(2\ell)^{s-2}p^vhX^2+\dots+(-1)^{s-1}(p^vhX^2)^{s-1}
$$
we obtain: 
$$
\begin{aligned}
(f-p^{2u}\ell^2)[(2\ell)^{s-1}-(2\ell)^{s-2}p^vhX^2+\dots]&\equiv
p^{2u+v}hX^2((2\ell)^s+(-1)^{s-1}(p^vhX^2)^s)\\
&\equiv p^{2u+v}hX^2(2\ell)^s \pmod{p^{s}},
\end{aligned}
$$
since $v\ge1$. Hence we have
\begin{equation}\label{eqow2}
\left(\tilde f-\ell^2\right)[(2\ell)^{s-1}-(2\ell)^{s-2}p^vhX^2+\cdots]
\equiv p^{v}hX^2(2\ell)^s \pmod{p^{v+1}},
\end{equation}
since $s-2u\ge v+1$. We now compare the coefficients of $X^{2+d+s}$ on both sides, where $d$ is the degree of $[h]_p$. Let $h_d\in\Z$ be the coefficient of $h$ at $X^d$ (so $[h_d]_p$ is the dominant coefficient of $[h]_p$), and let $h_0$ be the constant term of $h$. The coefficient of $X^{2+d+s}$ modulo $p^{v+1}$ on the left hand side is the coefficient of 
$$
\left(\tilde f-\ell^2\right)(2\ell)^{s-2}p^vhX^2
$$
modulo $p^{v+1}$ (the terms that are not written in Equation \eqref{eqow2} will have order at least $2v\ge v+1$, and in the term $(\tilde f-\ell^2)(2\ell)^{s-1}$ all monomials have degree at most $s+1<2+d+s$), which is
$$
2p^vh_0g_0\cdot2^{s-2}g_1^{s-2}p^vh_d,
$$
(here the term $2p^vh_0g_0$ comes from Equation \eqref{eqow}). On the other hand, the right hand side of Equation \eqref{eqow2} gives 
$$
p^vh_d2^sg_1^s,
$$
so we have
$$
p^v\cdot2h_0g_0\cdot 2^{s-2}g_1^{s-2}h_d\equiv
h_d2^sg_1^s\pmod{p},
$$
which is a contradiction since $p$ divides neither $g_1$ nor $h_d$. 
\end{proof}

Note that if any of $f_1$ or $f_2$ is invertible modulo $p$, then the condition 
$$
\min\{\ord_pf_1,\ord_pf_2\}>\ord_pf_0
$$ 
is never satisfied. We now prove a sequence of lemmas that will imply our main theorem. 

\begin{lemma}\label{lemRedInvf0NS}
Let $p$ be an odd prime and $s\ge1$. Let $f(X)=f_2X^2+f_1X+f_0\in\Z[X]$. Write $t_i=\ord_p f_i$ for each $i$. Assume that $\min\{t_1,t_2\}>t_0$ and $[f_0]_{p^s}$ is a non-square. If $\gb_{f}^N$ is a B\"uchi sequence modulo $p^s$, then $N=0$. 
\end{lemma}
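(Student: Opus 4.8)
The plan is to prove the apparently stronger statement that $f(1)$ itself is not a square modulo $p^s$; since every $f$-B\"uchi sequence of length $N\ge 1$ has $f(1)$ among its (square) entries, this forces $N=0$.

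First I would extract two elementary consequences of the hypotheses. Because $0$ is a square modulo $p^s$, the assumption that $[f_0]_{p^s}$ is a non-square forces $f_0\not\equiv 0\pmod{p^s}$; in particular $f_0\ne 0$, so $t_0=\ord_p f_0$ is finite and $t_0<s$, hence $t_0+1\le s$. Second, from $\min\{t_1,t_2\}>t_0$ we get $\ord_p(f_1+f_2)\ge\min\{t_1,t_2\}\ge t_0+1$, so $p^{t_0+1}\mid f_1+f_2$ and therefore $f(1)=f_0+(f_1+f_2)\equiv f_0\pmod{p^{t_0+1}}$.

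Now suppose for contradiction that $N\ge 1$, so $f(1)$ is a square modulo $p^s$. Since $t_0+1\le s$, reducing a congruence $x^2\equiv f(1)\pmod{p^s}$ modulo $p^{t_0+1}$ and using the previous paragraph shows that $f_0$ is a square modulo $p^{t_0+1}$. As $\ord_p f_0=t_0<t_0+1$, it is a \emph{non-zero} square modulo $p^{t_0+1}$, so Lemma~\ref{cor0BSx0Inv} yields that $f_0$ is a square modulo $p^s$ --- contradicting the hypothesis that $[f_0]_{p^s}$ is a non-square. Hence $N=0$.

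I do not expect a serious obstacle: the argument is essentially a Hensel-lifting statement (Lemma~\ref{cor0BSx0Inv}) applied after noticing that the inequality $\min\{t_1,t_2\}>t_0$ makes $f(1)$ agree with $f_0$ to precision $p^{t_0+1}$, while the non-square hypothesis on $f_0$ is exactly what guarantees $t_0+1\le s$ so that this precision is available modulo $p^s$. The only things worth stating carefully are these two bookkeeping points; note also that the same computation gives $f(k)\equiv f_0\pmod{p^{t_0+1}}$ for every $k$, so in fact no $f(k)$ with $k\ge 1$ is a square modulo $p^s$, but only the case $k=1$ is needed for the statement.
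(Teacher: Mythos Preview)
Your proof is correct and follows essentially the same route as the paper's: both argue that $t_0<s$, that $f(1)\equiv f_0\pmod{p^{t_0+1}}$ (the paper obtains this by factoring $f=p^{t_0}g$ and reducing $g(1)$ modulo $p$, you by directly bounding $\ord_p(f_1+f_2)$), and then invoke Lemma~\ref{cor0BSx0Inv} to lift the resulting non-zero square modulo $p^{t_0+1}$ to a square modulo $p^s$, contradicting the hypothesis on $f_0$.
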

\begin{proof}
Note that $t_0\ne\infty$. Also note that $t_0<s$ (because $[f_0]_{p^s}$ is a non-square, so in particular it is not $[0]_{p^s}$). Write $f(X)=p^{t_0}g(X)$, where $g(X)=p^{t_2-t_0}g_2X^2+p^{t_1-t_0}g_1X+g_0$, so that $g_0$ is invertible modulo $p$. We assume $N\ge1$ and will get a contradiction. Let $k,x\in\Z$ be such that $f(1)=x^2+kp^s$. From the hypothesis of the lemma, we have
$$
x^2+kp^s=f(1)=p^{t_0}g(1)\equiv p^{t_0}g_0=f_0 \pmod{p^{t_0+1}},
$$
hence, recalling that $t_0+1\le s$, $f_0$ is a square modulo $p^{t_0+1}$. Since $g_0$ is non-zero modulo $p$, also $f_0$ is non-zero modulo $p^{t_0+1}$. So $f_0$ is a square modulo $p^s$ by Lemma \ref{cor0BSx0Inv}, which contradicts our hypothesis on $f_0$. 
\end{proof}

\begin{lemma}\label{lemRedInvmint1}
Let $p$ be an odd prime and $s\ge1$. Let $f_1,f_2\in\Z$. If $f_1$ is invertible modulo $p$ and $f_2$ is not invertible modulo $p$, then
$$
\optg(p^s,f_2,f_1)=\optg(p,0,f_1)
$$
\end{lemma}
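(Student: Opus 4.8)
The plan is to show the two inequalities $\optg(p^s,f_2,f_1)\le\optg(p,0,f_1)$ and $\optg(p^s,f_2,f_1)\ge\optg(p,0,f_1)$ separately, working with maximal lengths $\mlg$ throughout and converting at the end. For the easy direction $\optg(p^s,f_2,f_1)\ge\optg(p,0,f_1)$: take a non-trivial B\"uchi sequence $\gb_g^N$ modulo $p$ with $g=p^{?}\cdots$ — more precisely, realize the value $\mlg(p,0,f_1)$ by a polynomial $g(X)=f_1X+g_0$ over $\F_p$ whose values $g(1),\dots,g(N)$ are squares mod $p$ but which is not a square mod $p$ (non-triviality here just means $g$ is not itself a square polynomial mod $p$, which since $f_1$ is a unit is automatic unless the sequence is empty). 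I would then lift $g_0$ to an integer $\tilde f_0$ which is a non-zero square modulo $p^s$ whenever $g_0\not\equiv 0$ (possible by Lemma \ref{cor0BSx0Inv}: pick any integer that is a square mod $p$ and nonzero mod $p$, then it is a square mod $p^s$), and set $f(X)=f_2X^2+f_1\tilde f_1 X+\tilde f_0$ for a suitable lift $\tilde f_1$ of the unit, arranging $f(k)\equiv g(k)\pmod p$ with $g(k)$ a nonzero square mod $p$, so $f(k)$ is a nonzero square mod $p$ hence (Lemma \ref{cor0BSx0Inv}) a square mod $p^s$. The case $g_0\equiv 0$, i.e. when $0$ is one of the squares, needs a moment's care because $0$ mod $p$ need not lift to a square mod $p^s$; but one can translate the index or choose $\tilde f_0$ so that none of $f(1),\dots,f(N)$ is divisible by $p$, which is possible since $f_1$ is a unit and the residues $f(1),\dots,f(N)$ mod $p$ are an arithmetic progression of length $N<p$. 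Finally I must check $f$ is non-trivial mod $p^s$: since $f_2\in p\Z$ but $f_1$ is a unit, $f$ is not the square of a degree-$\le 1$ polynomial (comparing $X$-coefficients would force a unit to be twice a multiple of $p$, impossible), and the first alternative in Proposition \ref{PropTrivial}(\ref{PropTrivial2}) fails because $\ord_p f_1=0\le\ord_p f_0$; so by Proposition \ref{PropTrivial}, $f$ is non-trivial, giving $\mlg(p^s,f_2,f_1)\ge N=\mlg(p,0,f_1)$.

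For the hard direction $\optg(p^s,f_2,f_1)\le\optg(p,0,f_1)$: let $f(X)=f_2X^2+f_1X+f_0$ be any non-trivial B\"uchi sequence $\gb_f^N$ modulo $p^s$ with $N\ge 1$; I want to produce a B\"uchi sequence modulo $p$ of the same length $N$ for some linear polynomial with leading-linear coefficient $f_1$ — in fact the reduction $[f]_p=[f_1X+f_0]_p$ mod $p$ (using $f_2\in p\Z$) is the obvious candidate. Reducing mod $p$, each $f(k)$ is a square mod $p^s$ hence a square mod $p$, so $(f_1\cdot 1+f_0,\dots,f_1 N+f_0)$ is a sequence of squares mod $p$; it remains only to argue non-triviality of this reduced sequence, i.e. that $[f_1X+f_0]_p$ is not a square polynomial mod $p$. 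This is where I need that $f_1$ is a unit: a nonzero-degree polynomial over the field $\F_p$ of odd degree (here degree $1$) is never a perfect square, so $[f_1X+f_0]_p$ is automatically a non-square polynomial mod $p$. Hence $\mlg(p,0,f_1)\ge N$, and taking the max over all valid $f$ gives $\mlg(p,0,f_1)\ge\mlg(p^s,f_2,f_1)$.

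The main obstacle I expect is the bookkeeping around \emph{triviality} and the degenerate case where a value $f(k)$ is $\equiv 0\pmod p$. In the $\ge$ direction, lifting a square class mod $p$ to a genuine square mod $p^s$ via Lemma \ref{cor0BSx0Inv} only works for \emph{nonzero} square classes, so I must ensure the construction avoids the residue $0$; because $f_1$ is invertible the values $f(k)\bmod p$ for $1\le k\le N<p$ form an injective arithmetic progression, so at most one of them is $0$, and a shift of the base point or a choice of $f_0$ removes it — this requires checking that shifting does not shorten the sequence and does not inadvertently make $f$ trivial, but the earlier remark after Proposition \ref{PropTrivial} (``if $f_1$ or $f_2$ is invertible modulo $p$, then $\min\{\ord_p f_1,\ord_p f_2\}>\ord_p f_0$ is never satisfied'') takes care of the triviality side cleanly since $f_1$ stays a unit. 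Once these edge cases are dispatched, both inequalities are short, and combining them with $\optg=\mlg+1$ yields $\optg(p^s,f_2,f_1)=\optg(p,0,f_1)$.
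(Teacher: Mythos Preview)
Your overall strategy is the same as the paper's: reduce modulo $p$ for the bound $\le$, and lift a maximal linear B\"uchi sequence via Lemma~\ref{cor0BSx0Inv} for the bound $\ge$. The ``$\le$'' direction is fine and matches the paper almost verbatim. Your non-triviality check for the lifted $f$ via Proposition~\ref{PropTrivial} is also correct (the paper instead observes that $f(N+1)\equiv h(N+1)\pmod p$ is a non-square by maximality of $N$, but either argument works).

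The gap is in your handling of the zero residue in the ``$\ge$'' direction. You propose to ``translate the index or choose $\tilde f_0$ so that none of $f(1),\dots,f(N)$ is divisible by $p$''. This cannot be done in general: the residues $f(k)\bmod p$ are determined by the linear polynomial $g=f_1X+g_0$ realizing $N=\mlg(p,0,f_1)$, and shifting $g_0$ to avoid the zero changes the sequence itself. Concretely, for $p=5$ and $f_1=1$ the \emph{only} constant term giving a run of length $3$ is $g_0\equiv 3$, with values $4,0,1$; every other choice of $g_0$ gives a strictly shorter run, so you cannot realize $N=3$ while avoiding $0$.

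The paper's fix is not to avoid the zero but to \emph{hit it exactly}: if $h(x_0)\equiv 0\pmod p$, take
\[
f(X)=f_2X^2+f_1X-f_1x_0-f_2x_0^2,
\]
so that $f(x_0)=0\in\Z$ is a square modulo $p^s$, while for $x\ne x_0$ one still has $f(x)\equiv h(x)\not\equiv 0\pmod p$ and Lemma~\ref{cor0BSx0Inv} applies. Your phrase ``choose $\tilde f_0$'' is pointing in the right direction, but the target should be ``make $f(x_0)$ a square modulo $p^s$'' (e.g.\ equal to $0$), not ``make $f(x_0)$ coprime to $p$'', which is impossible. With this correction your plan goes through and coincides with the paper's proof. (Minor cleanups: $f_1$ is already a fixed integer in the hypothesis, so there is no ``lift $\tilde f_1$'' to perform; and you want $\tilde f_0\equiv g_0\pmod p$, not $\tilde f_0$ itself to be a square.)
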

\begin{proof}
We first prove the ``$\le$'' inequality. Let $N\ge0$ and $f_0$ be integers. Write $f=f_2X^2+f_1X+f_0$ and assume that $\gb_{f}^N$ is a non-trivial B\"uchi sequence modulo $p^s$. Modulo $p$, since $f_2$ is not invertible, we have $f\equiv f_1X+f_0$. Write $g=f_1X+f_0$. Since $f(x)$ is a square modulo $p^s$ for each $x=1,\dots,N$, it is a square modulo $p$, so $\gb_g^N$ is a B\"uchi sequence modulo $p$. Since $f_1$ is invertible, $\gb_g^N$ is a non-trivial B\"uchi sequence modulo $p$, hence $N$ is at most $\mlg(p,0,f_1)$.

We now prove the other inequality. Let $h=f_1X+b$ be such that $\gb_{h}^N$ is a B\"uchi sequence of length $N=\mlg(p,0,f_1)$ (note that this is always finite by item \ref{thJNT4} of Theorem \ref{thJNT}). Consider 
$$
f=f_2X^2+f_1X+b\equiv f_1X+b\pmod p.
$$
If in the sequence $\gb_{h}^N$ there is no $0 \pmod p$, then $f(x)$ is a non-zero square modulo $p$ for any $x\in\{1,\dots,N\}$, hence it is a square modulo $p^s$ by Lemma \ref{cor0BSx0Inv}. Assume that there is some $x_0\in\{1,\dots,N\}$ such that $h(x_0)$ is congruent to $0$ modulo $p$, so that $b$ is congruent to $-f_1x_0$ modulo $p$ (there can be at most one such $x_0$). In that case, consider instead
$$
f=f_2X^2+f_1X-f_1x_0-f_2x_0^2\equiv f_1X+b\pmod p,
$$
so that $f(x_0)$ is actually $0\in\Z$, hence a square modulo $p^s$, and as before, when $x\ne x_0$, $f(x)$ is a non-zero square modulo $p^s$. In both cases, $\gb_{f}^N$ is a B\"uchi sequence modulo $p^s$. 

We now prove that $\gb_{f}$ is a non-trivial B\"uchi sequence modulo $p^s$. It is enough to prove that $f(N+1)$ is not a square modulo $p^s$. Indeed, we have $f(N+1)\equiv h(N+1)\pmod p$, and the latter is not a square by definition of $h$, so $f(N+1)$ is not even a square modulo $p$. 
\end{proof}

Next comes the key lemma for having a uniform proof of Theorem \ref{small}.

\begin{lemma}\label{lemRedInvMin}
Let $p$ be an odd prime and $s\ge1$. Let $f_1,f_2\in\Z$, with $f_i\notin p^s\Z$ unless $f_i=0$. Assume that not both $f_1$ and $f_2$ are $0$. Write $t_1=\ord_p f_1$ and $t_2=\ord_pf_2$. For $i\in\{1,2\}$, let $g_i$ be such that $f_i=p^{t_i}g_i$ (if $f_i=0$, take $g_i=0$). Write $m=\min\{t_1,t_2\}$.
\begin{enumerate}
\item\label{lemRedInvMineq} If $m$ is even, then we have:
$$
\optg(p^s,f_2,f_1)=\optg(p^{s-m},p^{t_2-m}g_2,p^{t_1-m}g_1)
$$
(where $p^{t_i-m}g_i$ reads as $0$ if $f_i=0$).
\item\label{lemRedInvMineqinf} The sides of the equation in item \ref{lemRedInvMineq} are infinite if and only if $p=3$, $s=m+1$ and $g_2\in 2+3\Z$.
\item\label{lemRedInvMineqodd} If $m$ is odd, then we have:
$$
\optg(p^s,f_2,f_1)\le3.
$$
\item\label{lemRedInvMineqother} If $t_2=\infty$ and $t_1$ is odd, then $\optg(p^s,0,f_1)\le2$. 
\end{enumerate}
\end{lemma}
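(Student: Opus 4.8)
\textbf{Plan of proof for Lemma \ref{lemRedInvMin}.}

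The plan is to prove the four items essentially independently, reducing each to facts already established. For item \ref{lemRedInvMineq}, the idea is the usual ``factor out a square power of $p$'' trick. When $m=\min\{t_1,t_2\}$ is even, write $f=p^m h$, where $h(X)=p^{t_2-m}g_2X^2+p^{t_1-m}g_1X+f_0'$ for a variable constant term (here at least one of the leading or linear coefficients of $h$ is a unit modulo $p$). Since $m$ is even, $p^m$ is a perfect square, so for any constant term the element $f(x)=p^m h(x)$ is a square modulo $p^s$ precisely when $h(x)$ is a square modulo $p^{s-m}$: one direction is clear, and the other uses Lemma \ref{cor0BSx0Inv} together with the fact that if $p^m h(x)\equiv y^2\pmod{p^s}$ then $p^m\mid y^2$, so $p^{m/2}\mid y$ and $h(x)\equiv (y/p^{m/2})^2\pmod{p^{s-m}}$. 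This gives a length-preserving bijection between $f$-B\"uchi sequences modulo $p^s$ (for varying $f_0$) and $h$-B\"uchi sequences modulo $p^{s-m}$ (for varying constant term of $h$). The one subtle point, exactly as flagged in the surrounding text, is matching up the \emph{non-triviality} conditions on the two sides; this is where Proposition \ref{PropTrivial} is needed. One must check that $f=p^m h$ is a square of a polynomial modulo $p^s$ if and only if $h$ is a square of a polynomial modulo $p^{s-m}$, using the characterization in Proposition \ref{PropTrivial} --- note that the case ``$\ord_p f_0<\min\{\ord_p f_1,\ord_p f_2\}$ with $[f_0]_{p^s}$ a square'' scales correctly because $m$ is even, so squareness of $f_0=p^m f_0'$ modulo $p^s$ is equivalent to squareness of $f_0'$ modulo $p^{s-m}$ whenever $\ord_p f_0'<s-m$. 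I expect this triviality-bookkeeping to be the main obstacle, since it forces a short case analysis depending on which of $t_1,t_2$ attains the minimum and whether the minimizing constant term lands in the ``degenerate'' regime.

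For item \ref{lemRedInvMineqinf}: after the reduction in item \ref{lemRedInvMineq}, the right-hand side is $\optg(p^{s-m},p^{t_2-m}g_2,p^{t_1-m}g_1)$, where now $\min\{t_2-m,t_1-m\}=0$, i.e. the leading or the linear coefficient is a unit. If the leading coefficient $p^{t_2-m}g_2$ is a unit modulo $p$ (i.e. $t_2=m$), invoke Theorem \ref{thJNT}: the quantity is finite unless we are in case \ref{thJNT5}, namely $p=3$, $s-m=1$, and the leading coefficient lies in $2+3\Z$; since $t_2=m$ means $g_2$ itself is that leading coefficient modulo $3$, this is the stated condition. If instead $t_2>m$ so that $t_1=m$ and the linear coefficient is a unit but the leading coefficient is not (the ``mixed'' case), apply Lemma \ref{lemRedInvmint1} to get $\optg(p^{s-m},p^{t_2-m}g_2,p^{t_1-m}g_1)=\optg(p,0,p^{t_1-m}g_1)$, which is finite by Theorem \ref{thJNT}\ref{thJNT4}; in this subcase the sides are never infinite, and one checks the condition ``$g_2\in2+3\Z$'' cannot be triggered either since $t_2>m$ would be needed together with $s=m+1$ but then $f_2\in p^s\Z$ forces $f_2=0$, i.e. $g_2=0\notin2+3\Z$ --- so the equivalence still reads correctly. (One should double-check the boundary $t_2=\infty$ is swept into this last observation.)

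For item \ref{lemRedInvMineqodd}: here $m$ is odd. The plan is to show no non-trivial $f$-B\"uchi sequence modulo $p^s$ can have length $\ge3$, i.e. $f(1),f(2),f(3)$ cannot all be squares modulo $p^s$ unless $f$ is already a square of a polynomial. Factor $f=p^m h$ with $h$ having a unit among its top two coefficients and $h(x)\not\equiv0\pmod p$ for the relevant $x$ in the generic situation. Because $m$ is odd, $f(x)=p^m h(x)$ is a square modulo $p^{m+1}$ only if $h(x)\equiv0\pmod p$; so if $\gb_f^N$ is a B\"uchi sequence modulo $p^s$ with $s\ge m+1$ (the case $s\le m$ forces $f\equiv0$, hence trivial) then $h(1),\dots,h(N)$ all vanish modulo $p$. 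But $[h]_p$ is a nonzero polynomial of degree $\le2$ over the field $\F_p$, so it has at most $2$ roots; hence $N\le2$, giving $\optg\le3$. For item \ref{lemRedInvMineqother}: now $f=f_1X=p^{t_1}g_1X$ with $t_1$ odd and $g_1$ a unit modulo $p$. As above, $f(x)=p^{t_1}g_1x$ is a square modulo $p^{t_1+1}$ only if $p\mid x$, so in a B\"uchi sequence modulo $p^s$ with $s\ge t_1+1$ every $x\in\{1,\dots,N\}$ is divisible by $p$, forcing $N\le p-1$... but more sharply, $x=1$ already fails, so $N=0$ when $s\ge t_1+1$; when $s\le t_1$ we have $f\equiv0\pmod{p^s}$ so every sequence is trivial, giving $\optg=1$. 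Either way $\optg(p^s,0,f_1)\le2$. The routine calculations I am skipping are the explicit square-root extractions and the verification that ``nonzero square mod $p^t$ $\Rightarrow$ square mod $p^s$'' is being applied with the correct exponents; none of these is delicate, but they must be done carefully to keep the non-triviality condition exactly aligned across the reduction.
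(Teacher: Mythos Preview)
Your overall strategy matches the paper's: factor out $p^m$, transfer the B\"uchi condition between moduli $p^s$ and $p^{s-m}$, and use Proposition~\ref{PropTrivial} to align the triviality conditions. Item~\ref{lemRedInvMineqinf} is handled exactly as in the paper.

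There is, however, a genuine gap that runs through your treatment of the ``$\le$'' direction of item~\ref{lemRedInvMineq} and of items~\ref{lemRedInvMineqodd} and~\ref{lemRedInvMineqother}. You write $f=p^m h$ and proceed, but this factorization is only available when $t_0=\ord_p f_0\ge m$. Since $\optg$ is a maximum over \emph{all} $f_0$, you must also dispose of those $f_0$ with $t_0<m$. The paper does this in one stroke at the start: by Proposition~\ref{PropTrivial}, if $t_0<m$ and $[f_0]_{p^s}$ is a square then $f$ is already a square of a polynomial (hence trivial), while if $[f_0]_{p^s}$ is a non-square then Lemma~\ref{lemRedInvf0NS} forces $N=0$. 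Only after this can one assume $t_0\ge m$ and factor. You allude to a ``degenerate regime'' but never invoke Lemma~\ref{lemRedInvf0NS}, so the argument as written is incomplete.

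This gap is most visible in item~\ref{lemRedInvMineqother}: you set $f=f_1X$, i.e.\ you treat only the single value $f_0=0$, whereas the bound $\optg(p^s,0,f_1)\le 2$ must hold for every $f_0$. In the paper this item is not argued separately; it falls out of the case split inside the proof of item~\ref{lemRedInvMineqodd}: once $t_0\ge m$ is secured as above, one has $[g]_p\ne 0$ of degree $\le 2$, and in the sub-case $m=t_1<t_2$ (which covers $t_2=\infty$) the reduction $[g]_p$ is linear, so at most one $n$ can satisfy $g(n)\equiv 0\pmod p$, giving $N\le 1$. Your ``at most two roots'' argument for item~\ref{lemRedInvMineqodd} is the right idea but, again, only after the $t_0<m$ case has been eliminated.
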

\begin{proof}
We first prove items \ref{lemRedInvMineqodd} and \ref{lemRedInvMineqother}, together with the ``$\le$'' inequality in item \ref{lemRedInvMineq}. Let $N\ge0$ and $f_0$ be integers, and write $t_0=\ord_pf_0$ and $f_0=p^{t_0}g_0$ (with $g_0=0$ if $f_0=0$). Assume that $\gb_{f}^N$ is a non-trivial B\"uchi sequence modulo $p^s$, where $f=f_2X^2+f_1X+f_0$. In particular, by Proposition \ref{PropTrivial} the polynomial $f$ is not the square of a linear polynomial modulo $p^s$, and we have $m\le t_0$, unless $[f_0]_{p^s}$ is a non-square. If $m>t_0$ and $[f_0]_{p^s}$ is a non-square, we have $N=0$ by Lemma \ref{lemRedInvf0NS}, so we may assume $m\le t_0$. Write $f=p^{m}g$, where 
$$
g=p^{t_2-m}g_2X^2+p^{t_1-m}g_1X+p^{t_0-m}g_0.
$$ 

We can now complete the proof of item \ref{lemRedInvMineqodd}. Assume $m$ is odd. In that case, if $[f(n)]_{p^s}$ is a square, then $[g(n)]_{p}=[0]_p$. If $m=t_1<t_2$, we have
$$
g=p^{t_2-t_1}g_2X^2+g_1X+p^{t_0-t_1}g_0\equiv g_1X+p^{t_0-t_1}g_0\pmod p,
$$ 
hence $g(n)$ can be $0$ modulo $p$ for at most one value of $n$, hence $N\le1$. If $m=t_2\le t_1$, we have 
$$
g=g_2X^2+p^{t_1-t_2}g_1X+p^{t_0-t_2}g_0,
$$
so $g(n)$ can be $0$ modulo $p$ for at most two values of $n$, hence $N\le2$. 

We now turn to item \ref{lemRedInvMineq}. Assume $m$ is even. Since $f$ is not the square of a linear polynomial modulo $p^s$, also $g$ is not the square of a linear polynomial modulo $p^{s-m}$. Moreover, since $t_0\ge m=\min\{t_2,t_1\}$, we have 
$$
t_0-m\ge\min\{t_2-m,t_1-m\},
$$
hence $\gb_{g}^N$ is a non-trivial B\"uchi sequence modulo $p^{s-m}$ by Proposition \ref{PropTrivial}, so we have 
$$N\le \mlg(p^{s-m},p^{t_2-m}g_2,p^{t_1-m}g_1).
$$

We now prove ``$\ge$'' in item \ref{lemRedInvMineq} (so in particular, we assume that $m$ is even). First note that the claim is trivial when $t_2=0$ (which is the case in particular when $s=1$). Let 
$$
g=p^{t_2-m}g_2X^2+p^{t_1-m}g_1X+b
$$ 
be such that $\gb_{g}^N$ is a non-trivial B\"uchi sequence of length $N=\mlg(p^{s-m},p^{t_2-m}g_2,p^{t_1-m}g_1)$ ($N$ may be $\infty$). 

Consider 
$$
f=p^{m}g=p^{m}(p^{t_2-m}g_2X^2+p^{t_1-m}g_1X+b). 
$$
For any $x\in\{1,\dots,N\}$, since $g(x)$ is a square modulo $p^{s-m}$ and $m$ is even, also $f(x)$ is a square modulo $p^{s}$, so $\gb_{f}$ is a B\"uchi sequence modulo $p^s$. 

We now prove that $\gb_{f}^N$ is a non-trivial B\"uchi sequence modulo $p^s$. Since $\gb_{g}^N$ is a non-trivial B\"uchi sequence modulo $p^{s-m}$, $g$ is not the square of a linear polynomial modulo $p^{s-m}$, hence also, since $m$ is even, $p^mg$ is not the square of a linear polynomial modulo $p^s$. Moreover, by Proposition \ref{PropTrivial}, either $\min\{t_1-m,t_2-m\}\le\ord_pb$, in which case $\min\{t_1,t_2\}\le\ord_pp^mb$, or $[b]_{p^{s-m}}$ is not a square, in which case $[p^mb]_{p^s}$ is not a square. So $\gb_{f}^N$ is a non-trivial B\"uchi sequence modulo $p^s$.

We prove item \ref{lemRedInvMineqinf}. If $m=t_1<t_2$, then the right-hand side is finite by Lemma \ref{lemRedInvmint1}. 
Otherwise it is an immediate consequence of Theorem \ref{thJNT} applied to modulus $p^{s-m}$ (observe that the only case where $\optg$ is infinite is in item \ref{thJNT5}). 
\end{proof}

\begin{corollary}\label{corRedInvcoro}
Let $p$ be an odd prime and $s\ge1$. Let $f_1,f_2\in\Z$, with $f_i\notin p^s\Z$, unless $f_i=0$. Write $t_i=\ord_p f_i$ and let $g_i$ be such that $f_i=p^{t_i}g_i$ (if $f_i=0$, take $g_i=0$). We have
$$
\optg(p^s,f_2,f_1)=
\begin{cases}
\optg(p^{s-t_2},g_2,p^{t_1-t_2}g_1)&\textrm{ if $t_2\le t_1$ and $t_2$ is even,}\\
\optg(p,0,g_1)&\textrm{ if $t_2>t_1$ and $t_1$ is even.}\\
\end{cases}
$$
\end{corollary}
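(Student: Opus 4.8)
The plan is to derive Corollary \ref{corRedInvcoro} from Lemma \ref{lemRedInvMin} by a short case analysis on the relative sizes and parities of $t_1$ and $t_2$, using Lemma \ref{lemRedInvmint1} to handle the case where a coefficient becomes invertible after factoring out the relevant power of $p$. I would organize the argument into the two displayed cases.

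\medskip

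\textbf{Case $t_2\le t_1$ with $t_2$ even.} Here $m=\min\{t_1,t_2\}=t_2$ is even, so item \ref{lemRedInvMineq} of Lemma \ref{lemRedInvMin} applies directly and gives
$$
\optg(p^s,f_2,f_1)=\optg(p^{s-t_2},p^{t_2-t_2}g_2,p^{t_1-t_2}g_1)=\optg(p^{s-t_2},g_2,p^{t_1-t_2}g_1),
$$
which is exactly the first branch. (One should note the degenerate subcases: if $f_2=0$ then $t_2=\infty$ and the hypothesis $t_2\le t_1$ forces $f_1=0$ too, contradicting that not both are zero when we are in the scope of the lemma; and if $f_1=0$ then $t_1=\infty>t_2$ puts us in the other case unless $t_2=t_1$, i.e. both zero — so this branch is genuinely about finite $t_2\le t_1$. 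This is routine bookkeeping rather than a real obstacle.)

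\medskip

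\textbf{Case $t_2>t_1$ with $t_1$ even.} Now $m=t_1$ is even, and item \ref{lemRedInvMineq} gives
$$
\optg(p^s,f_2,f_1)=\optg(p^{s-t_1},p^{t_2-t_1}g_2,p^{t_1-t_1}g_1)=\optg(p^{s-t_1},p^{t_2-t_1}g_2,g_1).
$$
Since $t_2>t_1$, the coefficient $p^{t_2-t_1}g_2$ is divisible by $p$, while $g_1$ is invertible modulo $p$; also $s-t_1\ge1$ since $t_1<t_2$ and $f_2\notin p^s\Z$ forces $t_2\le s$, hence $t_1\le s-1$. Therefore Lemma \ref{lemRedInvmint1}, applied with modulus $p^{s-t_1}$ and dominant coefficient $p^{t_2-t_1}g_2$ (not invertible) and linear coefficient $g_1$ (invertible), yields
$$
\optg(p^{s-t_1},p^{t_2-t_1}g_2,g_1)=\optg(p,0,g_1),
$$
which is the second branch. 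If instead $f_2=0$, then $t_2=\infty>t_1$, and after factoring $p^{t_1}$ we are in the situation $\optg(p^{s-t_1},0,g_1)$; one checks this again equals $\optg(p,0,g_1)$ — this is just the $t_2=\infty$ instance of Lemma \ref{lemRedInvmint1} (the proof of that lemma never uses invertibility of the reduction of $f_2$ beyond the fact that it vanishes mod $p$, which holds a fortiori when $f_2=0$), or one can cite it directly after noting the argument goes through verbatim.

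\medskip

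I do not expect a genuine obstacle here: the corollary is essentially a repackaging of Lemma \ref{lemRedInvMin}\ref{lemRedInvMineq} followed by one application of Lemma \ref{lemRedInvmint1}. The only point requiring a little care is confirming that $s-t_1\ge1$ and $s-t_2\ge 0$ (in fact $s-t_2\ge1$ when $t_2<s$, and when $t_2=s$ the hypothesis $f_2\notin p^s\Z$ unless $f_2=0$ is violated, so $t_2<s$ whenever $f_2\neq0$) so that all the invoked statements are applied with a legitimate positive exponent, together with the handful of degenerate subcases where one of $f_1,f_2$ vanishes. These are all immediate from the standing hypotheses $f_i\notin p^s\Z$ unless $f_i=0$.
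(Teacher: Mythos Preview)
Your proposal is correct and follows essentially the same route as the paper: apply item~\ref{lemRedInvMineq} of Lemma~\ref{lemRedInvMin} with $m=\min\{t_1,t_2\}$, and in the second branch follow it by one application of Lemma~\ref{lemRedInvmint1}. The paper's argument is terser (it does not spell out the degenerate subcases or the check that $s-t_1\ge1$), but the logical structure is identical.
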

\begin{proof}
First note that the claim is trivial when $t_2=0$. If $t_2\le t_1$ and $t_2$ is even, this is just Lemma \ref{lemRedInvMin}. Assume that $t_2>t_1$ and $t_1$ is even. In particular, since $t_2>t_1$, $f_1$ cannot be $0$. We have 
$$
\optg(p^s,f_2,f_1)=\optg(p^{s-t_1},p^{t_2-t_1}g_2,g_1)=\optg(p,0,g_1)
$$
(recalling the convention that $p^{t_2-t_1}g_2=0$ if $f_2=0$), where the first equality comes from Lemma \ref{lemRedInvMin}, and the second equality comes from Lemma \ref{lemRedInvmint1} (which can be applied because $p^{t_2-t_1}g_2$ is not invertible modulo $p$, but $g_1$ is invertible modulo $p$ since $f_1\ne0$).
\end{proof}

\begin{lemma}\label{revival}
Let $p$ be a prime $\ge3$ and $s\ge1$. Let $f_2\in\Z$, with $f_2\notin p^s\Z$ unless $f_2=0$. Write $t_2=\ord_pf_2\ne0$. We have:
\begin{enumerate}
\item If $t_2=\infty$, then 
$$
\max\{\optg(p^s,f_2,f_1)\colon \textrm{ $\ord_p f_1<\infty$ is even}\}\ge2,
$$
\item If $t_2<\infty$ is odd, then
$$
\max\{\optg(p^s,f_2,f_1)\colon \textrm{ $\ord_p f_1<t_2$ and $\ord_p f_1$ is even}\}\ge3.
$$
\item If $t_2<\infty$ is even, then
$$
\max\{\optg(p^s,f_2,f_1)\colon \textrm{ $\ord_p f_1\ge t_2$, or $\ord_p f_1<t_2$ and $\ord_p f_1$ is even}\}\ge3.
$$
\end{enumerate}
\end{lemma}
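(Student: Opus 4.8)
\textbf{Proof proposal for Lemma \ref{revival}.}

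The plan is to exhibit, in each of the three cases, an explicit non-trivial $f$-B\"uchi sequence modulo $p^s$ of the claimed length, choosing $f_1$ with the prescribed order at $p$ and a suitable $f_0$. The unifying idea is to reduce everything to the invertible-dominant-coefficient results packaged in Theorem \ref{thJNT} and the reduction lemmas already proved (Lemma \ref{lemRedInvmint1}, Lemma \ref{lemRedInvMin}, Corollary \ref{corRedInvcoro}), so that in each case one only needs a sequence of length $2$ or $3$, which is very cheap to produce by hand.

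For case (1), where $t_2=\infty$, i.e. $f_2=0$: I take $f_1$ with $\ord_p f_1=0$, say $f_1$ invertible modulo $p$ (this is an even order, as required), so by Lemma \ref{lemRedInvmint1} we need only produce a non-trivial B\"uchi sequence modulo $p$ of length $\ge1$ for a linear polynomial $g=f_1X+f_0$; concretely choose $f_0$ so that $g(1)\equiv0\pmod p$ and $g(2)$ is a non-square modulo $p$ — possible since as $f_0$ varies $g(2)=f_1\cdot2+f_0-f_1$ hits every residue, in particular a non-residue. Triviality fails because $g$ is linear with invertible leading coefficient, so by Proposition \ref{PropTrivial} it is not a square modulo $p$ (its value $g(2)$ isn't even a square). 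This gives length $\ge1$, i.e. $\optg\ge2$.

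For cases (2) and (3), the goal is a non-trivial B\"uchi sequence of length $2$. In case (2), $t_2$ odd: pick $f_1$ with $\ord_pf_1=0<t_2$ (even). By Lemma \ref{lemRedInvMin}\ref{lemRedInvMineqodd} applied the other direction — or better, directly — I want $g=f_2X^2+f_1X+f_0$ with $g(1),g(2)$ squares but $g(3)$ a non-square modulo $p^s$; since $\ord_p f_1=0$, modulo $p$ the polynomial $g$ is linear ($g\equiv f_1X+f_0$), so choosing $f_0\equiv-f_1\pmod p$ makes $g(1)\equiv0$, hence $g(2),g(3)$ are forced to specific nonzero residues modulo $p$ and one further adjusts $f_0$ modulo higher powers of $p$ so that $g(1),g(2)$ are actual squares modulo $p^s$ (using Lemma \ref{cor0BSx0Inv} for the nonzero values, and Hensel-type lifting) while $g(3)$ stays a nonsquare — there is enough freedom in $f_0$ modulo $p^s$ to do this since $g(3)-g(2)$ and $g(2)-g(1)$ only constrain things linearly. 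Non-triviality again follows from Proposition \ref{PropTrivial}: with $\ord_p f_1=0<t_2$ and (after the choice) $\ord_p f_0=0$ as well, the only way $g$ could be trivial is to be the square of a linear polynomial, which is impossible since $g(3)$ is a non-square. Case (3), $t_2$ even: the cleanest route is to invoke Corollary \ref{corRedInvcoro} with $t_1\ge t_2$ chosen so that we land in the first branch $\optg(p^s,f_2,f_1)=\optg(p^{s-t_2},g_2,p^{t_1-t_2}g_1)$, reducing to the invertible-$f_2$ case modulo $p^{s-t_2}$; there $\opt$ is always $\ge3$ by Theorem \ref{thJNT} (items \ref{thJNT2}, \ref{thJNT3}, and the fact that even $\opt(p,n)\ge3$ for the non-square case when $p\ge5$, while for $p=3$ item \ref{thJNT6} gives $5$, and one checks the small remaining case $s=t_2$ by hand, where one needs a length-$2$ sequence for $g_2X^2+\cdots$ modulo $p$, again trivial to build).

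The main obstacle I anticipate is the bookkeeping in case (2) and the $s=t_2$ sub-case of case (3): one must simultaneously arrange that two values are squares modulo $p^s$ and a third is a non-square, while keeping $\ord_p f_0$ in the range that makes Proposition \ref{PropTrivial}'s triviality condition fail. This is genuinely a finite, explicit verification rather than a conceptual difficulty, but it requires care to present uniformly — in particular one should note that a length-$2$ example modulo $p$ already exists (two consecutive residues, one zero and one a square, with the next a non-square) and then lift it, rather than working modulo $p^s$ from the start. I would organise the write-up as three short paragraphs mirroring the three items, with the lifting argument stated once as a sub-claim and reused.
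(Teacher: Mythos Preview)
Your route is unnecessarily indirect and, as written, has real gaps. The paper disposes of items 2 and 3 in one stroke by simply writing down a single polynomial: set $f_1=1-3f_2$ and $f_0=2f_2-1$, so that $f(1)=0$ and $f(2)=1$ hold \emph{as integers}; both are automatically squares modulo any $p^s$, and non-triviality is then checked directly via Proposition \ref{PropTrivial} (since $\ord_pf_1=\ord_pf_0=0$ and modulo $p$ the polynomial reduces to $X-1$, it cannot be the square of a linear polynomial). No lifting, no appeal to Theorem \ref{thJNT}, no case split. For item 1 the paper uses $f=b^2X$ with $b$ a unit, again exhibiting an explicit length-one sequence.

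Concretely, your argument breaks down in two places. In case 2 you fix $f_0\equiv -f_1\pmod p$ so that $g(1)\equiv0$; but then $g(2)\equiv f_1$ and $g(3)\equiv 2f_1$ are \emph{forced}, not free, so arranging simultaneously ``$g(2)$ square, $g(3)$ nonsquare'' demands that $2$ be a nonresidue modulo $p$, which fails for $p\equiv\pm1\pmod 8$. (Requiring $g(3)$ nonsquare is in any case superfluous: the lemma only asks for a non-trivial sequence of length $2$, and non-triviality should be verified through Proposition \ref{PropTrivial}, not via an extra non-square value.) In case 3 you invoke Theorem \ref{thJNT} to conclude that $\opt(p^{s-t_2},g_2)\ge3$, but Theorem \ref{thJNT} does not supply the lower bounds you need: you must still argue that $\opt(p,1)\ge3$ and $\opt(p,n)\ge3$ for nonresidues $n$ (and handle $p=3$, $s-t_2=1$, $g_2\equiv1$), none of which is stated there. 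These are provable, but you would end up constructing explicit length-two examples anyway --- at which point the paper's one-line polynomial already does the whole job uniformly.
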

\begin{proof}
For the first item, just note that for any non-zero $b\in\Z$ which is coprime with $p$, the function $f=b^2X$ defines a non-trivial B\"uchi sequence of length $\ge1$. Indeed, if $f\equiv (g_1X+g_2)^2\pmod{p^s}$, then $g_1^2\equiv g_ 2^2\equiv 0\pmod{p^s}$, hence $[g_1]_p=[g_2]_p=0$, but $2g_1g_2\equiv b^2\pmod{p^s}$, which contradicts the fact that $b$ is coprime with $p$, and the constant term is $0$, hence has order $\ge$ than the order of the other coefficients. We conclude by Proposition \ref{PropTrivial} that it is a non-trivial sequence. 
 
For items 2 and 3, choose $f=f_2X^2+f_1X+f_0$ with $f_1=1-3f_2$ and $f_0=2f_ 2-1$, so that $f(1)=0$ and $f(2)=1$ (so they are squares modulo any $p^s$). Since $t_2\ne0$, $f_2$ is divisible by $p$, hence $\ord_p(f_1)=0$ is even and $<t_2$. Moreover, $\gb_f^2$ is a non-trivial sequence because, one the one hand we have $\ord_p(f_0)=0\ge\min\{\ord_pf_1,\ord_pf_2\}$, and on the other hand it is not the square of a linear polynomial modulo $p^s$. If it were, then we would have
$$
f=f_2X^2+(1-3f_ 2)X+2f_2-1\equiv g_1^2X^2+2g_1g_2X+g_2^2\pmod{p^s},
$$
which is impossible, since modulo $p$ the right-hand side is a constant polynomial (because $p$ divides $f_2$, hence also $g_1$), while the left-hand side is a non-constant polynomial since $1-3f_ 2$ is not divisible by $p$. 
\end{proof}

We conclude this work with the proof of our main theorem. 

\begin{proof}[Proof of Theorem \ref{small}]
If $f_2=0$, then we have (recalling the convention $\optg(p^s,0,0)=1$ ---  see the introduction)
$$
\begin{aligned}
\optg(p^s,f_2)&=\max\{\optg(p^s,0,f_1)\colon \textrm{ $f_1\in\Z$}\}\\
&=\max\left(\{\optg(p^s,0,f_1)\colon \textrm{ $\ord_p f_1<\infty$ is even}\}\cup
\{\optg(p^s,0,f_1)\colon \textrm{ $\ord_p f_1<\infty$ is odd}\}\right)\\
&=\max\left(\{\optg(p^s,0,f_1)\colon \textrm{ $\ord_p f_1<\infty$ is even}\}\right)\\
&=\max\{\optg(p,0,g_1)\colon \textrm{ $g_1$ is invertible modulo $p$}\}\\
&=\optg(p,0),
\end{aligned}
$$ 
where the second and third equalities come from item 1 of Lemma \ref{revival} and Lemma \ref{lemRedInvMin}, and the fourth equality comes from Corollary \ref{corRedInvcoro}.

If $t_2<\infty$ is odd, then we have (using again Lemmas \ref{revival}, Lemma \ref{lemRedInvMin} and Corollary \ref{corRedInvcoro}): 
$$
\begin{aligned}
\optg(p^s,f_2)&=\max\{\optg(p^s,f_2,f_1)\colon \textrm{ $f_1\in\Z$}\}\\
&=\max(\{\optg(p^s,f_2,f_1)\colon \textrm{ $\ord_p f_1<t_2$ and $\ord_p f_1$ is even}\}\\
&\qquad\qquad\cup\{\optg(p^s,f_2,f_1)\colon \textrm{ $\ord_p f_1\ge t_2$ or $\ord_p f_1$ is odd}\})\\
&=\max\left(\{\optg(p^s,f_2,f_1)\colon \textrm{ $\ord_p f_1<t_2$ and $\ord_p f_1$ is even}\}\right)\\
&=\max\{\optg(p,0,g_1)\colon \textrm{ $g_1$ is invertible modulo $p$}\}\\
&=\optg(p,0).
\end{aligned}
$$ 

If $0\ne t_2<\infty$ is even, then we have
$$
\begin{aligned}
\optg(p^s,f_2)&=\max\left\{\optg(p^s,f_2,f_1)\colon \textrm{ $f_1\in\Z$}\right\}\\
&=\max\{\optg(p^s,f_2,f_1)\colon \textrm{ $\ord_p f_1\ge t_2$, or $\ord_p f_1<t_2$ and $\ord_p f_1$ is even,}\\
&\qquad\qquad\qquad\qquad\qquad\qquad \textrm{or $\ord_p f_1<t_2$ and $\ord_p f_1$ is odd}\}\\
&=\max\left(\{\optg(p^{s-t_2},g_2,p^{t_1-t_2}g_1)\colon \textrm{ $[g_1]_p\ne[0]_p$}\}\cup\left\{\optg(p,0,g_1)\colon \textrm{ $[g_1]_p\ne[0]_p$}\right\}\right)\\
&=\max\left\{\optg(p^{s-t_2},g_2),\optg(p,0)\right\}.
\end{aligned}
$$ 
\end{proof}


\vspace{10pt}

\noindent Pablo S\'aez\\
Independent, Chile\\
Email: pablosaezphd@gmail.com\\

\noindent Xavier Vidaux (corresponding author)\\
Universidad de Concepci\'on\\
Facultad de Ciencias F\'isicas y Matem\'aticas\\
Departamento de Matem\'atica\\
Casilla 160 C\\
Email: xvidaux@udec.cl\\
Tel.: +56 9 61 55 28 08\\

\noindent Maxim Vsemirnov\\
St.~Petersburg Department of V.A.Steklov Institute of Mathematics, \\
27 Fontanka, St.~Petersburg, 191023, Russia\\
and \\
St.~Petersburg State University, \\
Department of Mathematics and Mechanics, \\
28 University prospekt, St. Petersburg, 198504,  Russia\\
Email: vsemir@pdmi.ras.ru

\end{document}